\documentclass[12pt]{article}
\usepackage[latin1]{inputenc}
\usepackage[british]{babel}
\usepackage{cmap}
\usepackage{lmodern}

\usepackage{amssymb, amsmath, amsthm}
\usepackage[a4paper,top=25mm,bottom=25mm,left=25mm,right=25mm]{geometry}
\usepackage{ragged2e}

\usepackage{authblk} 
\usepackage{pifont}
\usepackage{graphicx}
\usepackage[usenames,dvipsnames,svgnames,table]{xcolor}
\usepackage[figuresright]{rotating}
\usepackage{xtab} 
\usepackage{longtable} 
\usepackage{multirow}
\usepackage{footnote}
\usepackage[stable]{footmisc}
\usepackage{chngpage} 
\usepackage{pdflscape} 
\usepackage[nottoc,notlot,notlof]{tocbibind} 

\usepackage{pgfplots}
\pgfplotsset{every tick label/.append style={font=\footnotesize}}
\pgfplotsset{compat=1.14}
\usepackage{setspace}

\usepackage{array}
\newcolumntype{K}[1]{>{\centering\arraybackslash$}p{#1}<{$}}

\usepackage{algorithm}
\usepackage{algpseudocode}

\makesavenoteenv{tabular}
\usepackage{tabularx}
\usepackage{booktabs}
\usepackage{threeparttable}
\usepackage[referable]{threeparttablex} 
\newcolumntype{R}{>{\raggedleft\arraybackslash}X}
\newcolumntype{L}{>{\raggedright\arraybackslash}X}
\newcolumntype{C}{>{\centering\arraybackslash}X}
\newcolumntype{A}{>{\columncolor{gray!25}}C}
\newcolumntype{a}{>{\columncolor{gray!25}}c}

\newlength{\tablen}

\usepackage{dcolumn} 
\newcolumntype{.}{D{.}{.}{-1}}

\usepackage{tikz}
\usetikzlibrary{arrows, calc, matrix, patterns, positioning, trees}
\usepackage[semicolon]{natbib}
\usepackage[hyphens]{url}
\usepackage{hyperref} 
\hypersetup{
  colorlinks   = true,    		
  urlcolor     = blue,    		
  linkcolor    = blue,    		
  citecolor    = ForestGreen	
}
\usepackage{microtype}
\usepackage[justification=centering]{caption} 

\usepackage[labelformat=simple]{subcaption}

\DeclareCaptionLabelFormat{parenthesis}{(#2)}
\captionsetup[subfigure]{labelformat=parenthesis,font+=small,list=false}
\makeatletter
\renewcommand\p@subfigure{\arabic{figure}.}
\makeatother

\DeclareCaptionLabelFormat{parenthesis}{(#2)}
\captionsetup[subtable]{labelformat=parenthesis,font+=small,list=false}
\makeatletter
\renewcommand\p@subtable{\arabic{table}.}
\makeatother

\usepackage{enumitem}

\setlist[itemize]{leftmargin=2.5\parindent}
\setlist[enumerate]{leftmargin=2.5\parindent}

%
\def\addlegendimage{\csname pgfplots@addlegendimage\endcsname}

\theoremstyle{plain}

\newtheorem{theorem}{Theorem}

\theoremstyle{definition}

\newtheorem{definition}{Definition}[section]
\newtheorem{example}{Example}

\theoremstyle{remark}

\newtheorem{remark}{Remark}


\makeatletter
\let\@fnsymbol\@alph
\makeatother

\def\keywords{\vspace{.5em} 
{\noindent \textit{Keywords}: }}

\def\AMS{\vspace{.5em} 
{\noindent \textbf{\emph{MSC} class}: }}

\def\JEL{\vspace{.5em} 
{\noindent \textbf{\emph{JEL} classification number}: }}

\title{A lexicographically optimal completion for pairwise comparison matrices with missing entries}
\author{Kolos Csaba \'Agoston\thanks{~Email: \emph{kolos.agoston@uni-corvinus.hu} \newline Corvinus University of Budapest (BCE), Institute of Operations and Decision Sciences, Department of Operations Research and Actuarial Sciences, Budapest, Hungary}
$\qquad \qquad$
\href{https://sites.google.com/view/laszlocsato}{L\'aszl\'o Csat\'o}\thanks{~Corresponding author. Email: \emph{laszlo.csato@sztaki.hu} \newline
Institute for Computer Science and Control (SZTAKI), E\"otv\"os Lor\'and Research Network (ELKH), Laboratory on Engineering and Management Intelligence, Research Group of Operations Research and Decision Systems, Budapest, Hungary \newline
Corvinus University of Budapest (BCE), Institute of Operations and Decision Sciences, Department of Operations Research and Actuarial Sciences, Budapest, Hungary}} 
\date{\today}

\def\Dedication{
{\noindent
``\emph{One of the most appealing properties of the nucleolus, \\
as a solution concept, is its uniqueness.}''\footnote{~Source: \citet[p.~1164]{Schmeidler1969}.}
}

\flushright
\begin{small}
(David Schmeidler: \emph{The nucleolus of a characteristic function game})
\end{small}

\vspace{0.5cm} 
\justify }

\begin{document}

\maketitle
\thispagestyle{empty}
\Dedication

\begin{abstract}
\noindent
Estimating missing judgements is a key component in many multi-criteria decision making techniques, especially in the Analytic Hierarchy Process. Inspired by the Koczkodaj inconsistency index and a widely used solution concept of cooperative game theory called the nucleolus, the current study proposes a new algorithm for this purpose. In particular, the missing values are substituted by variables, and the inconsistency of the most inconsistent triad is reduced first, followed by the inconsistency of the second most inconsistent triad, and so on. The necessary and sufficient condition for the uniqueness of the suggested lexicographically optimal completion is proved to be a simple graph-theoretic notion: the undirected graph associated with the pairwise comparisons, where the edges represent the known elements, should be connected. Crucially, our method does not depend on an arbitrarily chosen measure of inconsistency as there exists essentially one reasonable triad inconsistency index.

\keywords{Decision analysis; Analytic Hierarchy Process (AHP); inconsistency optimisation; incomplete pairwise comparisons; linear programming}

\AMS{90-10, 90B50, 91B08}

\JEL{C44, D71}
\end{abstract}

\clearpage

\section{Introduction} \label{Sec1}

\subsubsection*{Pairwise comparisons and inconsistency}

In many decision-making problems, the importance of the $n$ alternatives should be measured numerically. Since asking each weight directly from an expert imposes a substantial cognitive burden, especially if $n$ is large, it is a common strategy to divide the problem into the simplest subproblems. Thus, the decision-maker is required to provide assessments on only two alternatives. However, the resulting pairwise comparisons are usually \emph{inconsistent}: even if alternative $i$ is two times better than alternative $j$ and alternative $j$ is three times better than alternative $k$, it is not guaranteed that alternative $i$ will be six times better than alternative $k$.
These deviations from a consistent set of pairwise comparisons can be quantified by inconsistency indices.

The inconsistency of complete pairwise comparisons is thoroughly studied in the literature and there exist dozens of different measures to determine the level of inconsistency \citep{BortotBrunelliFedrizziPereira2023, BozokiRapcsak2008, Brunelli2018, BrunelliFedrizzi2023, Cavallo2020}. One of the most popular indices has been suggested by Waldemar W.~Koczkodaj \citep{Koczkodaj1993, DuszakKoczkodaj1994}, which is the first with an axiomatic characterisation \citep{Csato2018a}. It is based on triads (pairwise comparisons among three alternatives) and identifies the inconsistency of a pairwise comparison matrix with the highest inconsistency of all its triads.

\subsubsection*{Estimating missing judgements}

In practice, some pairwise comparisons may be missing due to a lack of knowledge, time pressure, uncertainty, or other factors. Incomplete pairwise comparison matrices are often handled by filling them completely \citep{Harker1987a, Harker1987b}. An attractive way of completing a matrix is to formulate an optimisation problem where the missing entries are substituted by variables and the objective function is given by an inconsistency index of the corresponding complete matrix \citep{KoczkodajHermanOrlowski1999, TekileBrunelliFedrizzi2023, UrenaChiclanaMorente-MolineraHerrera-Viedma2015}. However, the implied non-linear optimisation problem may be difficult to solve.

The approach of minimising global inconsistency has been suggested for the consistency index of Saaty \citep{Saaty1977} in \citet{ShiraishiObataDaigo1998} and \citet{ShiraishiObata2002}. The resulting problem has been solved by \citet{BozokiFulopRonyai2010}. \citet{BozokiFulopRonyai2010} have also considered the logarithmic least squares objective function to achieve the best completion according to the geometric consistency index \citep{CrawfordWilliams1985, AguaronMoreno-Jimenez2003, AguaronEscobarMoreno-Jimenez2021}. Some authors have followed the same idea by finding a filling that minimises other inconsistency indices \citep{ErguKou2013, FedrizziGiove2007}.

On the other hand, \citet{SirajMikhailovKeane2012} propose a method based on the generation of all possible preferences (in the graph representation, all spanning trees) from the set of comparisons. \citet{BozokiTsyganok2019} prove the equivalence of the geometric mean of weight vectors calculated from all spanning trees and the logarithmic least squares problem.
\citet{PanLuLiuDeng2014} apply Dempster--Shafer evidence theory and information entropy to rank the alternatives by incomplete pairwise comparisons. \citet{ZhouHuDengChanIshizaka2018} develop a procedure to estimate missing values based on decision-making and trial evaluation laboratory (DEMATEL) method rather than minimising the level of inconsistency.

An inconsistency index is sometimes normalised by dividing it with a constant to get an (in)consistency ratio \citep{AgostonCsato2022, Saaty1977}. However, this transformation does not affect an algorithm that aims to obtain an optimal completion of missing judgements for a particular inconsistency index. 

\subsubsection*{Motivation and the main contributions}

Unexpectedly, the extant literature does not analyse how the completion of an incomplete pairwise comparison matrix can be optimised with respect to the well-established Koczkodaj inconsistency index. Probably, the reason is rather trivial: since this measure of inconsistency depends only on the most inconsistent triad, uniqueness is not guaranteed.

The current paper aims to fill that research gap. Therefore, we adopt the idea behind the nucleolus, a widely used solution concept in cooperative game theory \citep{Schmeidler1969}. To be more specific, the possible completions are ordered lexicographically based on the inconsistencies of all triads, and the one that gives the minimal element of this ordering is picked up. In other words, the inconsistency of the most inconsistent triad is reduced first, followed by the inconsistency of the second most inconsistent triad, and so on.

The main theoretical finding of our study resides in proving a simple graph-theoretic condition for the uniqueness of the proposed completion. In particular, the solution is unique if and only if the undirected graph where the nodes are the alternatives and the edges represent the known entries of the pairwise comparison matrix is connected. This is a natural necessary condition but sufficiency is non-trivial.

It is also presented how the optimal filling can be obtained by solving successive linear programming (LP) models.
Last but not least, we provide a simulation comparison of the suggested lexicographically optimal completion and the matrix that achieves the lowest possible inconsistency according to Saaty's consistency index.

\subsubsection*{The competitive edge of our proposal}

The major advantage of the lexicographically optimal completion method compared to other procedures suggested in the literature is that it essentially does not depend on an arbitrarily chosen measure of inconsistency: there is a unique reasonable index to quantify the inconsistency of triads \citep{Csato2019b}---which is the root cause of inconsistency in any pairwise comparison matrix---and lexicographic minimisation is a straightforward approach in optimisation. This is important because a plethora of inconsistency indices exist, each of them reflecting a different perspective on the quantification of inconsistency \citep{Brunelli2018}. However, among them, only the Koczkodaj inconsistency index does \emph{not} allow to compensate for the increased inconsistency of a particular triad with a reduction in the inconsistency of other triads. Therefore, the lexicographically optimal completion seems to be extremal among all optimisation methods based on inconsistency measures. 

Naturally, there are other approaches to fill in the missing elements \citep{Harker1987a, Harker1987b, TekileBrunelliFedrizzi2023, UrenaChiclanaMorente-MolineraHerrera-Viedma2015, ZhouHuDengChanIshizaka2018}. However, since inconsistency is strongly related to the validity of the priorities derived from the judgements \citep{Ozdemir2005}, ensuring a low level of inconsistency is crucial to obtain reliable results.

\subsubsection*{Practical relevance}

The proposed technique can be used to obtain the missing values in an incomplete pairwise comparison matrix, which is a key component in many multi-criteria decision making (MCDM) methods \citep{KouErguLinChen2016}, especially in the classical Analytic Hierarchy Process (AHP) \citep{IshizakaLabib2011, Saaty1977, Saaty1980}. Therefore, it might add value to any practical or managerial applications of the AHP \citep{BhushanRai2007, FormanGass2001, PereiraBamel2023, VaidyaKumar2006, Vargas1990} if
\begin{enumerate}[label=(\alph*)]
\item
the expert is not able to form a strong opinion on a particular judgement; or
\item
some pairwise comparisons have been lost; or
\item
the required number of questions ($n(n-1)/2$) is impossible to ask. 
\end{enumerate}
Finally, as a small example will show in Section~\ref{Sec6}, our algorithm provides a potentially different ranking in sports for players whose performances have been assessed by incomplete pairwise comparison matrices \citep{BozokiCsatoTemesi2016, ChaoKouLiPeng2018, Csato2013a, PetroczyCsato2021}. 

\subsubsection*{Structure}

The paper is organised as follows. Section~\ref{Sec2} presents basic definitions for incomplete pairwise comparison matrices. The lexicographically optimal completion is introduced and illustrated in Section~\ref{Sec3}. The central result, the necessary and sufficient condition for the uniqueness of the optimal solution is given in Section~\ref{Sec4}. The case of independent missing comparisons (placed in different rows) is studied in Section~\ref{Sec5}. Section~\ref{Sec6} analyses the similarity between the proposed technique and the completion minimising the inconsistency index of Saaty. A brief summary and discussions can be found in Section~\ref{Sec7}.

\section{Preliminaries} \label{Sec2}

The decision-maker is asked questions such as ``How many times alternative $i$ is preferred to alternative $j$?'', for which the numerical evaluation is $a_{ij}$. These relative judgements are collected into a matrix.
Let $\mathbb{R}_+$ denote the set of positive numbers, $\mathbb{R}^n_+$ denote the set of positive vectors of size $n$, and $\mathbb{R}^{n \times n}_+$ denote the set of positive square matrices of size $n$ with all elements greater than zero, respectively.

\begin{definition} \label{Def21}
\emph{Pairwise comparison matrix}:
Matrix $\mathbf{A} = \left[ a_{ij} \right] \in \mathbb{R}^{n \times n}_+$ is a \emph{pairwise comparison matrix} if $a_{ji} = 1 / a_{ij}$ for all $1 \leq i,j \leq n$.
\end{definition}

Denote the set of pairwise comparison matrices by $\mathcal{A}$ and the set of pairwise comparison matrices of order $n$ by $\mathcal{A}^{n \times n}$.
A pairwise comparison matrix of order three is called \emph{triad}.

In the ideal case, any indirect comparison of two alternatives leads to the same result as their direct comparison.

\begin{definition} \label{Def22}
\emph{Consistency}:
Pairwise comparison matrix $\mathbf{A} = \left[ a_{ij} \right] \in \mathcal{A}^{n \times n}$ is \emph{consistent} if $a_{ik} = a_{ij} a_{jk}$ holds for all $1 \leq i,j,k \leq n$.
\end{definition}

If the pairwise comparison matrix does not meet consistency, it is called \emph{inconsistent}. The degree of violating consistency can be measured by an inconsistency index.

\begin{definition} \label{Def23}
\emph{Inconsistency index}:
Let $\mathbf{A} \in \mathcal{A}$ be a pairwise comparison matrix.
\emph{Inconsistency index} $I : \mathcal{A} \to \mathbb{R}_+ \cup \{ 0 \}$ assigns a non-negative number to matrix $\mathbf{A}$.
\end{definition}

A widely used inconsistency index has been suggested by Waldemar W.~Koczkodaj \citep{Koczkodaj1993, DuszakKoczkodaj1994}. It focuses on the inconsistencies of individual triads and identifies the inconsistency of the whole matrix with the worst local inconsistency.

\begin{definition} \label{Def24}
\emph{Koczkodaj inconsistency index}:
Let $\mathbf{A} = \left[ a_{ij} \right] \in \mathcal{A}$ be an arbitrary pairwise comparison matrix.
Its \emph{Koczkodaj inconsistency index} $KI(\mathbf{A})$ is as follows:
\[
KI(\mathbf{A}) = \max_{1 \leq i,j,k \leq n} \min \left\{ \left| 1 - \frac{a_{ik}}{a_{ij} a_{jk}} \right|; \left| 1 - \frac{a_{ij} a_{jk}}{a_{ik}} \right| \right\}.
\]
\end{definition}

\begin{definition} \label{Def25}
\emph{Natural triad inconsistency index}:
Let $\mathbf{A} = \left[ a_{ij} \right] \in \mathcal{A}^{3 \times 3}$ be an arbitrary triad.
Its \emph{natural triad inconsistency index} $TI(\mathbf{A})$ is as follows:
\[
TI(\mathbf{A}) = \max \left\{ \frac{a_{ik}}{a_{ij} a_{jk}}; \frac{a_{ij} a_{jk}}{a_{ik}} \right\}.
\]
\end{definition}

According to equation (3.6) in \citet{BozokiRapcsak2008}, there exists a one-to-one correspondence between the Koczkodaj inconsistency index $KI$ and the natural triad inconsistency index $TI$ on the set of triads $\mathcal{A}^{3 \times 3}$:
\[
KI(\mathbf{A}) = 1- \frac{1}{TI(\mathbf{A})}.
\]

Sometimes, certain comparisons are missing, which will be denoted by $\ast$ in the following. 

\begin{definition} \label{Def26}
\emph{Incomplete pairwise comparison matrix}:
Matrix $\mathbf{A} = \left[ a_{ij} \right]$ is an \emph{incomplete pairwise comparison matrix} if $a_{ij} \in \mathbb{R}_+ \cup \{ \ast \}$ and for all $1 \leq i,j \leq n$, $a_{ij} \in \mathbb{R}_+$ implies $a_{ji} = 1 / a_{ij}$ and $a_{ij} = \ast$ implies $a_{ji} = \ast$.
\end{definition}

The set of incomplete pairwise comparison matrices of order $n$ is denoted by $\mathcal{A}_{\ast}^{n \times n}$. The number of unknown entries is denoted by $m$.

The structure of the known comparisons can be described by an undirected unweighted graph.

\begin{definition} \label{Def27}
\emph{Graph representation}:
The undirected graph $G = (V,E)$ represents the incomplete pairwise comparison matrix $\mathbf{A} \in \mathcal{A}_{\ast}^{n \times n}$ if
\begin{itemize}
\item
there is a one-to-one correspondence between the vertex set $V = \left\{ 1,2, \dots ,n \right\}$ and the alternatives;
\item
an edge is assigned to each known comparison outside the diagonal and vice versa, that is, $(i,j) \in E \iff a_{ij} \neq \ast$.
\end{itemize}
\end{definition}

\begin{example} \label{Examp1}
Consider the following incomplete pairwise comparison matrix of order four, where $a_{13}$ (thus $a_{31}$) and $a_{14}$ (thus $a_{41}$) remain undefined:
\[
\mathbf{A} = \left[
\begin{array}{K{3em} K{3em} K{3em} K{3em}}
    1     	& a_{12}  	& \ast   	& \ast   \\
    a_{21}	& 1       	& a_{23}	& a_{24} \\
   \ast		& a_{32}	& 1      	& a_{34} \\
   \ast 	& a_{42}  	& a_{43}	& 1 \\
\end{array}
\right].
\]

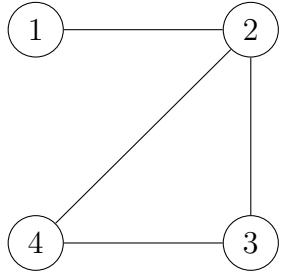
\begin{figure}[t!]
\centering
\begin{tikzpicture}[scale=1, auto=center, transform shape, >=triangle 45]
\tikzstyle{every node}=[draw,shape=circle];
  \node (n1) at (135:2) {$1$};
  \node (n2) at (45:2)  {$2$};
  \node (n3) at (315:2) {$3$};
  \node (n4) at (225:2) {$4$};

  \foreach \from/\to in {n1/n2,n2/n3,n2/n4,n3/n4}
    \draw (\from) -- (\to);
\end{tikzpicture}

\caption{The graph representation of the pairwise comparison matrix $\mathbf{A}$ in Example~\ref{Examp1}}
\label{Fig1}
\end{figure}

Figure~\ref{Fig1} shows the associated graph $G$.
\end{example}

\section{A lexicographically optimal completion of the missing elements} \label{Sec3}

Pairwise comparison matrices are used to determine a weight vector $\mathbf{w} = \left[ w_i \right]$ that adequately approximates the pairwise comparisons, namely, $w_i / w_j \approx a_{ij}$. There are several weighting methods for this purpose, see the surveys of \citet{GolanyKress1993} and \citet{ChooWedley2004}. However, they require all comparisons to be known, hence the missing elements should be reconstructed.

In our opinion, the most intuitively appealing technique for this purpose is to replace the $m$ missing comparisons with variables collected in a vector $\mathbf{x} \in \mathbb{R}_+^m$ and optimise a reasonable inconsistency index $I \left( A(\mathbf{x}) \right)$ of the implied complete pairwise comparison matrix $A(\mathbf{x})$ as the function of $\mathbf{x}$.
Saaty has proposed perhaps the most popular inconsistency measure $CR$ \citep{Saaty1977, Saaty1980}, which depends on the maximal eigenvalue corresponding to the complete pairwise comparison matrix. \citet{ShiraishiObataDaigo1998} and \citet{ShiraishiObata2002} aim to choose the missing values to minimise $CR$. The resulting optimisation problem has been solved by \citet{BozokiFulopRonyai2010}.

Another widely used inconsistency metric is the geometric inconsistency index $GCI$ \citep{CrawfordWilliams1985, AguaronMoreno-Jimenez2003, AguaronEscobarMoreno-Jimenez2021}, determined by the sum of squared errors $e_{ij} = \log a_{ij} - \log (w_i / w_j)$. Again, the associated optimisation problem has been solved by \citet{BozokiFulopRonyai2010}.

For both the $CR$- and $GCI$-optimal solutions, the necessary and sufficient condition for uniqueness is the connectedness of the representing graph. This is a natural requirement because two alternatives represented by vertices without a path between them cannot be evaluated on a common scale.

In contrast to most inconsistency indices including $CR$ and $GCI$, the Koczkodaj index $KI$ does not measure the average inconsistency of a pairwise comparison matrix but its highest local inconsistency. Consequently, the idea of a $KI$-optimal completion for an incomplete pairwise comparison matrix is worth investigating as it can provide an alternative way to reconstruct the missing elements.

\begin{example} \label{Examp2}
Consider the incomplete pairwise comparison matrix from Example~\ref{Examp1} such that $a_{12} = 2$, $a_{24} = 8$, and $a_{23} = a_{34} = 1$, while the missing elements are substituted by variables:
\[
\mathbf{A(\mathbf{x})} = \left[
\begin{array}{K{3em} K{3em} K{3em} K{3em}}
    1     	& 2			& x_{13}   	& x_{14} \\
    1/2		& 1       	& 1			& 8 \\
   1/x_{13}	& 1 		& 1      	& 1 \\
   1/x_{14}	& 1/8	 	& 1  		& 1 \\
\end{array}
\right].
\]
The matrix contains four triads with the following values of $TI$:
\[
TI_{123}(\mathbf{x}) = \max \left\{ \frac{x_{13}}{2}; \frac{2}{x_{13}} \right\};
\]
\[
TI_{124}(\mathbf{x}) = \max \left\{ \frac{x_{14}}{16}; \frac{16}{x_{14}} \right\};
\]
\[
TI_{134}(\mathbf{x}) = \max \left\{ \frac{x_{14}}{x_{13}}; \frac{x_{13}}{x_{14}} \right\};
\]
\[
TI_{234}(\mathbf{x}) = \max \left\{ 8; \frac{1}{8} \right\}.
\]

According to equation (3.6) in \citet{BozokiRapcsak2008}, the Koczkodaj inconsistency index of matrix $\mathbf{A(\mathbf{x})}$ is
\[
KI \left( \mathbf{A(\mathbf{x})} \right) = 1 - \frac{1}{\max \left\{ TI_{123}(\mathbf{x}); TI_{124}(\mathbf{x}); TI_{134}(\mathbf{x}); TI_{234}(\mathbf{x}) \right\}}.
\]
Consequently, $KI \left( \mathbf{A(\mathbf{x})} \right)$ is minimal if $t(\mathbf{x}) = \max \left\{ TI_{123}(\mathbf{x}); TI_{124}(\mathbf{x}); TI_{134}(\mathbf{x}); TI_{234}(\mathbf{x}) \right\}$ is minimal. Since $TI_{234}(\mathbf{x}) = 8$, $t(\mathbf{x}) \geq 8$. In particular, $t(\mathbf{x}) = 8$ if all of the following conditions hold:
\[
TI_{123}(\mathbf{x}) \leq 8 \iff 1/4 \leq x_{13} \leq 16;
\]
\[
TI_{124}(\mathbf{x}) \leq 8 \iff 2 \leq x_{14} \leq 128;
\]
\[
TI_{134}(\mathbf{x}) \leq 8 \iff 1/8 \leq x_{13} / x_{14} \leq 8;
\]
For example, $KI \left( \mathbf{A(\mathbf{x})} \right)$ is minimal (equals 8) if $x_{13} = 16$ and $2 \leq x_{14} \leq 128$.
\end{example}

Example~\ref{Examp2} shows that uniqueness cannot be guaranteed even if the representing graph is connected because the index $KI$ accounts for only the worst local inconsistency, thus substantial freedom remains for any other triad.

An analogous problem emerges in cooperative game theory when the attitude of the coalition that objects most strongly to a suggested payoff vector (the coalition with the greatest excess) should be reduced as low as possible. In the solution concept of the \emph{nucleolus} \citep{Schmeidler1969}, if there is an equality between the maximal excesses of two payoff vectors, the next greatest excesses are compared, and so on. In other words, the nucleolus is the lexicographically minimal vector of coalitional excesses.
This idea can immediately be adopted in our setting.

\begin{definition} \label{Def31}
\emph{Lexicographically optimal completion}:
Let $\mathbf{A} \in \mathcal{A}^\ast$ be an incomplete pairwise comparison matrix.
Let $\mathbf{A}(\mathbf{x})$ be the complete pairwise comparison matrix where the missing entries of matrix $\mathbf{A}$ are replaced by the variables collected in $\mathbf{x}$. Let $t_{ijk}(\mathbf{x})$ be the inconsistency of the triad determined by the three alternatives $1 \leq i,j,k \leq n$ according to the inconsistency index $TI$ in matrix $\mathbf{A}(\mathbf{x})$. 
Let $\theta(\mathbf{x})$ be the vector of the $n(n-1)(n-2)/6$ local inconsistencies $t_{ijk}(\mathbf{x})$ arranged in non-increasing order, that is, $\theta_u(\mathbf{x}) \geq \theta_v(\mathbf{x})$ for all $u < v$.

Matrix $\mathbf{A}(\mathbf{x})$ is said to be a \emph{lexicographically optimal completion} of the incomplete pairwise comparison matrix $\mathbf{A}$ if, for any other completion $\mathbf{A}(\mathbf{y})$, there does not exist an index $1 \leq v \leq n(n-1)(n-2)/6$ such that $\theta_u(\mathbf{x}) = \theta_u(\mathbf{y})$ for all $u < v$ and $\theta_v(\mathbf{x}) > \theta_v(\mathbf{y})$.
\end{definition}

\begin{example} \label{Examp3}
Consider the incomplete pairwise comparison matrix from Example~\ref{Examp2}.
The unique lexicographically optimal filling is $x_{13} = 4$ and $x_{14} = 8$ with $\theta(\mathbf{x}) = \left[ 8, 2, 2, 2 \right]$. If $x_{13} > 4$, then $t_{123}(\mathbf{x}) > 2$. If $x_{14} < 8$, then $t_{124}(\mathbf{x}) > 2$. Finally, if $x_{13} \leq 4$ and $x_{14} \geq 8$, then $t_{134}(\mathbf{x}) \geq 2$ and the last inequality is strict if either $x_{13} < 4$ or $x_{14} > 8$.
\end{example}

Similar to the nucleolus, the lexicographically optimal completion can be obtained by solving successive linear programming (LP) problems when one considers the logarithmically transformed entries of the original pairwise comparison matrix.

\begin{example} \label{Examp4}
Take the incomplete pairwise comparison matrix from Example~\ref{Examp1}.
The four triads imply eight constraints due to the reciprocity condition:
\begin{eqnarray*}
z_1 \to \min \\
\log a_{12} + \log a_{23} - \log x_{13} & \leq & z_1 \\
- \log a_{12} - \log a_{23} + \log x_{13} & \leq & z_1 \\
\log a_{12} + \log a_{24} - \log x_{14} & \leq & z_1 \\
- \log a_{12} - \log a_{24} + \log x_{14} & \leq & z_1 \\
\log x_{13} + \log a_{34} - \log x_{14} & \leq & z_1 \\
- \log x_{13} - \log a_{34} + \log x_{14} & \leq & z_1 \\
\log a_{23} - \log a_{24} + \log a_{34} & \leq & z_1 \\
- \log a_{23} + \log a_{24} - \log a_{34} & \leq & z_1
\end{eqnarray*}

As we have seen in Example~\ref{Examp2}, if $\log_2 a_{12} = 1$, $\log_2 a_{24} = 3$, and $\log_2 a_{23} = \log_2 a_{34} = 0$, then $z_1 = \bar{z_1} = 3$ due to the seventh constraint but there are multiple optimal solutions.
Another LP is obtained by removing the constraints associated with the triad $(2,3,4)$:
\begin{eqnarray*}
z_2 \to \min \\
\log a_{12} + \log a_{23} - \log x_{13} & \leq & z_2 \\
- \log a_{12} - \log a_{23} + \log x_{13} & \leq & z_2 \\
\log a_{12} + \log a_{24} - \log x_{14} & \leq & z_2 \\
- \log a_{12} - \log a_{24} + \log x_{14} & \leq & z_2 \\
\log x_{13} - \log a_{34} + \log x_{14} & \leq & z_2 \\
- \log x_{13} + \log a_{34} - \log x_{14} & \leq & z_2
\end{eqnarray*}
This problem already has a unique solution with $\log_2 x_{13} = 2$ ($x_{13} = 4$), $\log_2 x_{14} = 3$ ($x_{14} = 8$), and $z_2 = 1$.
\end{example}

To summarise, the lexicographically optimal completion can be obtained by an iterative algorithm as follows:
\begin{enumerate}
\item \label{LP_solution1}
A linear programming problem is solved to minimise the natural triad inconsistency index for all triads with an unknown value of $TI$.
\item
A triad (represented by two constraints in the LP), where the inconsistency index $TI$ cannot be lower, is chosen, which can be seen from the non-zero shadow price of at least one constraint.
\item
The inconsistency index $TI$ is fixed for this triad (or one of these triads if there exists more than one), the associated constraints are removed from the LP, and we return to Step~\ref{LP_solution1}.
\end{enumerate}
The process finishes if the minimal triad inconsistency indices are determined for all triads. The number of LPs to be solved is at most the number of triads having an incomplete pairwise comparison (which is finite) because the number of constraints in the LP decreases continuously.

In order to formulate this algorithm, some further notations are needed. Let $\mathcal{L}$ denote the index set of all triads. The elements of a triad $\ell \in \mathcal{L}$ are denoted by $i_\ell$, $j_\ell$, and $k_\ell$.

Consider the LP problem in the following form:
\begin{align*}
z & \rightarrow \min \tag{LP1.obj} \label{eq: obj_fun_LP} \\
\log a_{i_\ell,j_\ell} +  \log a_{j_\ell,k_\ell} + \log a_{k_\ell,i_\ell} &\le z_\ell  \qquad &\forall \ell\in\mathcal{L} \tag{LP1.1} \label{eq: g_elteres1_LP} \\
\log a_{i_\ell,j_\ell} +  \log a_{j_\ell,k_\ell} + \log a_{k_\ell,i_\ell} &\ge z_\ell  \qquad &\forall \ell\in\mathcal{L} \tag{LP1.2}\label{eq: g_elteres2_LP}
\\
z_\ell &\le z  \qquad &\forall \ell\in\mathcal{L} \tag{LP1.3}\label{eq: prem_LP} \\
\qquad \qquad  z_\ell &\ge 0  \qquad &\forall \ell\in\mathcal{L} \\
z &\ge 0\ \ ,\
\end{align*}
where $\log a_{i_\ell,j_\ell}$, $\log a_{j_\ell,k_\ell}$, $\log a_{k_\ell,i_\ell}$ is a parameter (unbounded decision variable) if the corresponding matrix element is known (missing). The suggested algorithm for the lexicographically optimal completion is provided in Algorithm~\ref{alg_LOC}.

\begin{algorithm}
\caption{Lexicographically optimal completion}
\label{alg_LOC}
\begin{algorithmic}[1]
\State \(\mathcal{M} \leftarrow \mathcal{L}\)
\State solve the LP problem LP1
\State \(obj \leftarrow\) objective value of LP1
\While{\(obj > 0\) }
\State find a constraint \(z_\ell\le z\) (\(\ell\in\mathcal{M}\)) for which the dual variable is negative
\State change constraint \(z_\ell\le z\) to \(z_\ell\le obj\)
\State \(\mathcal{M} \leftarrow \mathcal{M}\setminus\{\ell\}\)
\State solve the modified LP
\State \(obj \leftarrow\) objective value of the modified LP
\EndWhile
\end{algorithmic}

\end{algorithm}

Regarding the complexity of Algorithm~\ref{alg_LOC}, note that the number of variables in problem LP1 is at most $(n-1)(n-2)/2+n(n-1)(n-2)/6+1$: the number of missing elements in the pairwise comparison matrix plus the number of the triads (variables $z_\ell$) plus one (the variable $z$). The number of constraints is at most three times the number of triads, which is $n(n-1)(n-2)/2$. Since linear programming problems can be solved in polynomial time, and the number of the iterations in the ``while'' cycle of Algorithm~\ref{alg_LOC} is at most $n(n-1)(n-2)/2$ (the number of triads), Algorithm~\ref{alg_LOC} has a polynomial running time.
	
\begin{table}
\centering
\caption{Running times of Algorithm~\ref{alg_LOC}}
\label{Table1}
\rowcolors{1}{}{gray!20}
\begin{tabularx}{\textwidth}{CcC} \toprule
Matrix size ($n$) & Number of missing elements ($m$) & Running time (sec) \\ \bottomrule
    5     & 3     & 0.02 \\
    5     & 6     & 0.02 \\
    7     & 3     & 0.09 \\
    7     & 6     & 0.10 \\
    9     & 10    & 0.37 \\
    11    & 15    & 1.46 \\
    13    & 21    & 9.57 \\
    15    & 28    & 44.67 \\
    17    & 36    & 108.01 \\
    19    & 45    & 410.90 \\ \bottomrule
\end{tabularx}
\end{table}

Table~\ref{Table1} shows the running times for some particular pairs of matrix size $n$ and missing comparisons $m$, calculated as the average of ten randomly generated instances. Naturally, it is increasing with both variables. However, the running time is probably below one second for most problems arising in practice even if we do not make much effort to reduce it.

Finally, it is worth noting that the idea of going through the triads in decreasing order, starting from the most inconsistent one, appears in some inconsistency reduction methods for complete pairwise comparison matrices \citep{KoczkodajKosiekSzybowskiXu2015, KoczkodajSzybowski2016, MazurekPerzinaStrzalkaKowal2020}. However, while this problem has some connections to our setting, there is a crucial difference since we do not allow to change any known comparisons.

\section{A necessary and sufficient condition for the uniqueness of the lexicographically optimal completion} \label{Sec4}

In Examples~\ref{Examp3} and \ref{Examp4}, the lexicographic optimisation has resulted in a unique solution for the missing comparisons. However, it remains to be seen when this property holds in general.

In cooperative game theory, the nucleolus is always unique---but the proof is far from trivial, see \citet[Chapter~II.7]{Driessen1988}.
The necessary and sufficient condition for the uniqueness of our lexicographic optimisation problem is equivalent to the natural requirement that emerged in the case of the $CR$- and $GCI$-optimal completions: the graph representing the incomplete pairwise comparison matrix should be connected.

\begin{theorem} \label{Theo41}
The lexicographically optimal completion is unique if and only if the graph $G$ associated with the incomplete pairwise comparison matrix is connected.
\end{theorem}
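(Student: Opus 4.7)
The plan is to pass to the logarithmic scale, where every triad value $t_\ell(\mathbf{x}) = \log a_{i_\ell j_\ell} + \log a_{j_\ell k_\ell} + \log a_{k_\ell i_\ell}$ is a linear function of the vector $\mathbf{x}$ of logs of the missing entries, and then to reduce the uniqueness of the lexicographic optimum to injectivity of the linear map $T : \mathbf{x} \mapsto (t_\ell(\mathbf{x}))_{\ell \in \mathcal{L}}$. Necessity will follow immediately from a kernel description: whenever $G$ is disconnected, pick $w \in \mathbb{R}^n$ that is constant on each connected component but non-constant overall, and set $d_{ij} = w_i - w_j$. The perturbation $\mathbf{d}$ vanishes on every edge of $E$ (so it only alters missing entries) and on every triad (so every $t_\ell$ is preserved), yielding a distinct completion with an identical sorted inconsistency vector.

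The heart of the sufficiency argument is the intermediate claim that any two lex-optima $\mathbf{x}, \mathbf{y}$ must satisfy $t_\ell(\mathbf{x}) = t_\ell(\mathbf{y})$ for every triad $\ell$. I would prove this by a midpoint argument iterated over the levels of the sorted inconsistency vector $\theta^*$. Set $\mathbf{z} = (\mathbf{x}+\mathbf{y})/2$; convexity of $|\cdot|$ combined with linearity of $t_\ell$ gives $|t_\ell(\mathbf{z})| \leq (|t_\ell(\mathbf{x})| + |t_\ell(\mathbf{y})|)/2$, with strict inequality exactly when $t_\ell(\mathbf{x})$ and $t_\ell(\mathbf{y})$ have opposite signs. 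Lex-optimality of $\mathbf{x}$ implies $\mathbf{z}$ cannot lex-improve on $\theta^*$, and this forces, at the largest value $\theta_1^*$, every triad attaining $\theta_1^*$ in $\mathbf{z}$ to already attain $\theta_1^*$ in both $\mathbf{x}$ and $\mathbf{y}$ with matching signs; counting multiplicities of $\theta_1^*$ then shows that the three active sets at the top level coincide. Iterating the same argument downward through each distinct value of $\theta^*$ (while restricting to triads unresolved at higher levels) yields $t_\ell(\mathbf{x}) = t_\ell(\mathbf{y})$ for every $\ell$, i.e.\ $T(\mathbf{x}-\mathbf{y}) = 0$.

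It then remains to compute $\ker T$. A vector in $\ker T$, extended by zeros on the edges of $E$, gives a skew-symmetric matrix $\mathbf{d}$ satisfying $d_{ij} + d_{jk} + d_{ki} = 0$ for every triple; this is the log-consistency condition, so $d_{ij} = w_i - w_j$ for some $w \in \mathbb{R}^n$. The constraint $d_{ij} = 0$ for $(i,j) \in E$ then reads $w_i = w_j$ on every edge of $G$, and connectedness propagates these equalities to force $w$ constant, hence $\mathbf{d} = 0$. Combined with the previous step, this gives $\mathbf{x} = \mathbf{y}$ whenever $G$ is connected.

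The main obstacle I anticipate is the level-by-level midpoint argument: the bookkeeping required to show that at each distinct level of $\theta^*$ the active triad sets and the signs of $t_\ell$ coincide under $\mathbf{x}$, $\mathbf{y}$, and $\mathbf{z}$ must be carried out carefully because the sorted inconsistency vector is piecewise-linear and non-smooth. By contrast, both the kernel computation and the disconnected-graph construction are routine once the reduction to injectivity of $T$ is in place.
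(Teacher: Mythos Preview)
Your proposal is correct, and it shares the paper's two-step scaffold for sufficiency (first show that any two lex-optima yield the same triad data, then invoke connectedness), but the execution on both steps is genuinely different. For the first step, the paper takes a single convex combination and looks only at the \emph{largest} triad where the two optima differ, concluding merely that the unsigned inconsistencies $|t_\ell(\mathbf{x})|$ and $|t_\ell(\mathbf{y})|$ coincide for every $\ell$; you instead run the midpoint through every distinct level of $\theta^*$ and, by counting multiplicities, extract the stronger \emph{signed} identity $t_\ell(\mathbf{x})=t_\ell(\mathbf{y})$. For the second step, the paper walks along a path in $G$ and propagates the inequality $x_{ik}\neq y_{ik}$ from one triad to the next, whereas you recast the problem as injectivity of the linear map $T$ and compute its kernel via the cocycle identity $d_{ij}=w_i-w_j$. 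The payoff of your route is that once signed equality is in hand, the kernel computation is a clean piece of linear algebra plus one graph-connectedness fact; the paper's path argument is shorter to set up but its key propagation step (``$TI$ equal, one entry known, one differs $\Rightarrow$ the third differs'') is not an immediate consequence of unsigned equality alone, and your approach sidesteps that subtlety entirely. Your necessity argument via the potential $w$ is exactly the paper's block-rescaling written in additive coordinates. The level-by-level bookkeeping you flagged as the main obstacle does go through: at each level the inclusion $S_k(\mathbf{z})\subseteq S_k(\mathbf{x})\cap S_k(\mathbf{y})$ combined with the multiplicity lower bound forced by $\theta(\mathbf{z})\geq_{\mathrm{lex}}\theta^*$ pins down both the active set and the signs.
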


\begin{proof}
\emph{Necessity}:
Suppose that $G$ is not connected. Then the incomplete pairwise comparison matrix $\mathbf{A} \in \mathcal{A}^\ast$ can be rearranged by an appropriate permutation of the alternatives into a decomposable form $\mathbf{D}$:
\[
\mathbf{D} = \left[
\begin{array}{K{2em} K{2em}}
    \mathbf{A_1}	& \mathbf{X} \\
    \mathbf{X}^\top	& \mathbf{A_2} \\
\end{array}
\right],
\]
where the square pairwise comparison (sub)matrices $\mathbf{A_1} \in \mathcal{A}^{p \times p}$ and $\mathbf{A_2} \in \mathcal{A}^{(n-p) \times (n-p)}$ contain all known elements (but they can also contain undefined elements). Consequently, $\mathbf{X}$ is composed of only missing elements.

Assume that $\mathbf{A}(\mathbf{x})$ is a lexicographically optimal completion for the incomplete pairwise comparison $\mathbf{A}$. Take its decomposable form $\mathbf{D}(\mathbf{x})$. $\mathbf{D}(\mathbf{y})$---thus, $\mathbf{A}(\mathbf{y})$---is shown to be a lexicographically optimal filling if $y_{ij} = \alpha x_{ij}$ for all $1 \leq i \leq p$ and $p+1 \leq j \leq n$, and $y_{ij} = x_{ij}$ otherwise.

Denote by $t_{ijk}(\mathbf{x})$ the natural triad inconsistency index $TI$ of any triad $1 \leq i,j,k \leq n$ in the matrix $\mathbf{A}(\mathbf{x})$, which may depend on the variables collected in $\mathbf{x}$.
$\mathbf{D}(\mathbf{y})$ consists of four types of triads:
\begin{itemize}
\item
if $1 \leq i,j,k \leq p$, then $t_{ijk}(\mathbf{y}) = t_{ijk}(\mathbf{x})$ since $y_{ij} = x_{ij}$, $y_{ik} = x_{ik}$, and $y_{jk} = x_{jk}$;
\item
if $1 \leq i,j \leq p$ and $p+1 \leq k \leq n$, then $t_{ijk}(\mathbf{y}) = t_{ijk}(\mathbf{x})$ since $y_{ij} = x_{ij}$, $y_{ik} = \alpha x_{ik}$, and $y_{jk} = \alpha x_{jk}$;
\item
if $1 \leq i \leq p$ and $p+1 \leq j,k \leq n$, then $t_{ijk}(\mathbf{y}) = t_{ijk}(\mathbf{x})$ since $y_{ij} = \alpha x_{ij}$, $y_{ik} = \alpha x_{ik}$, and $y_{jk} = x_{jk}$;
\item
if $p + 1 \leq i,j,k \leq n$, then $t_{ijk}(\mathbf{y}) = t_{ijk}(\mathbf{x})$ since $y_{ij} = x_{ij}$, $y_{ik} = x_{ik}$, and $y_{jk} = x_{jk}$.
\end{itemize}
To summarise, $\theta(\mathbf{y}) = \theta(\mathbf{x})$ but $\mathbf{y} \neq \mathbf{x}$ for any $\alpha \neq 1$.

\emph{Sufficiency}:
Assume that there are two different lexicographically optimal completions $\mathbf{A}(\mathbf{x}) \neq \mathbf{A}(\mathbf{y})$ and graph $G$ is connected.
\begin{enumerate}[label=\Roman*.]
\item
It is verified that $t_{ijk}(\mathbf{x}) = t_{ijk}(\mathbf{y})$ for all $1 \leq i,j,k \leq n$, namely, the inconsistency of any triad is the same for the two optimal fillings.

On the basis of the LP formulation presented in Section~\ref{Sec3}, it is clear that $t_{ijk} \left( (1- \varepsilon) \mathbf{x} + \varepsilon \mathbf{y} \right) \leq \max \{ t_{ijk} (\mathbf{x}); t_{ijk} (\mathbf{y}) \}$ for all $1 \leq i,j,k \leq n$ and for every $0 \leq \varepsilon \leq 1$. Furthermore, $t_{ijk} \left( (1- \varepsilon) \mathbf{x} + \varepsilon \mathbf{y} \right) < \max \{ t_{ijk} (\mathbf{x}); t_{ijk} (\mathbf{y}) \}$ if $t_{ijk} (\mathbf{x}) \neq t_{ijk} (\mathbf{y})$ and $0 < \varepsilon < 1$.
Consider the triad $i,j,k$ that has the highest inconsistency in matrix $\mathbf{A}(\mathbf{x})$ or $\mathbf{A}(\mathbf{y})$ with $t_{ijk}(\mathbf{x}) \neq t_{ijk}(\mathbf{y})$. It can be assumed without loss of generality that $t_{ijk}(\mathbf{x}) > t_{ijk}(\mathbf{y})$, when $t_{uvw}(\mathbf{x}) > t_{ijk}(\mathbf{x})$ implies $t_{uvw}(\mathbf{x}) = t_{uvw}(\mathbf{y})$ and $t_{fgh} (\mathbf{y}) > t_{fgh} (\mathbf{x})$ implies $t_{fgh} (\mathbf{y}) \leq t_{ijk} (\mathbf{x})$. Consequently, for any $0 < \varepsilon < 1$:
\begin{itemize}
\item
$t_{uvw}(\mathbf{x}) > t_{ijk}(\mathbf{x})$ implies $t_{uvw} \left( (1- \varepsilon) \mathbf{x} + \varepsilon \mathbf{y} \right) \leq t_{uvw} (\mathbf{x})$ as $t_{uvw} (\mathbf{x}) = t_{uvw} (\mathbf{y})$;
\item
$t_{ijk} \left( (1- \varepsilon) \mathbf{x} + \varepsilon \mathbf{y} \right) < t_{ijk} (\mathbf{x})$ as $t_{ijk} (\mathbf{x}) > t_{ijk} (\mathbf{y})$; and
\item
$t_{fgh}(\mathbf{x}) \leq t_{ijk}(\mathbf{x})$ implies $t_{fgh} \left( (1- \varepsilon) \mathbf{x} + \varepsilon \mathbf{y} \right) \leq \max \{ t_{fgh} (\mathbf{x}); t_{fgh} (\mathbf{y}) \} \leq t_{ijk} (\mathbf{x})$.
\end{itemize}
Then $\mathbf{x}$ is not an optimal completion because $(1-\varepsilon) \mathbf{x} + \varepsilon \mathbf{y}$ is lexicographically smaller.
To conclude, if two different lexicographically optimal completion would exist, a real convex combination of them would lead to a lower objective function value.

\item
The optimal fillings $\mathbf{x}$ and $\mathbf{y}$ differ, thus, there exist $1 \leq i,j \leq n$ such that $x_{ij} \neq y_{ij}$, therefore, $a_{ij}$ is missing.
Since graph $G$ is connected, there is a path $(i, k_1, k_2, \dots k_\ell, j)$ from node $i$ to node $j$. Owing to the previous part of the proof, $t_{ijk_\ell}(\mathbf{x}) = t_{ijk_\ell}(\mathbf{y})$, implying $x_{ik_\ell} \neq y_{ik_\ell}$ because $x_{ij} \neq y_{ij}$ and $a_{jk_\ell}$ is known. Consequently, $a_{ik_\ell}$ is a missing entry. The same argument can be used to show that $x_{ik_{\ell-1}} \neq y_{ik_{\ell-1}}$ and $a_{ik_{\ell-1}}$ is missing. Finally, the unknown element $a_{ik_2}$ is reached with $x_{ik_{2}} \neq y_{ik_{2}}$. However, this is the single missing entry in the triad $\left( i,k_1,k_2 \right)$, hence, $t_{i k_1 k_2}(\mathbf{x}) \neq t_{i k_1 k_2}(\mathbf{y})$, which contradicts the equation $t_{ijk}(\mathbf{x}) = t_{ijk}(\mathbf{y})$ derived before. 
\end{enumerate}
Obviously, the second part of verifying sufficiency relies on the connectedness of the associated graph $G$.
\end{proof}

\begin{remark}
The uniqueness of the lexicographically optimal solution depends only on the positions of comparisons (the structure of graph $G$) and is not influenced by the values of comparisons.
\end{remark}

\section{The case of independent missing entries} \label{Sec5}

Solving an LP model to obtain the lexicographically optimal completion may set back the use of the proposed method. Therefore, it is shown that the optimal completion can be calculated without an LP solver if the missing elements are independent, that is, they are placed in different rows and columns.

First, consider the case of one unknown comparison. Without losing generality, it can be assumed that $a_{1n} = x$ is missing. This element appears in $(n-2)$ triads because the third alternative cannot be the first and the last. The LP to be solved is
\begin{eqnarray*}
z_1 \to \min \\
\log a_{1k} + \log a_{kn} - \log x & \leq & z_1 \\
- \log a_{1k} - \log a_{kn} + \log x & \leq & z_1,
\end{eqnarray*}
where $2 \leq k \leq n-1$. Thus, there are $2(n-2)$ constraints such that the left hand side of each constraint is a linear function of the variable $x$. Therefore, the optimal value of $z_1$ is the minimum value of the upper envelope of $| \log a_{1k} + \log a_{kn} - \log x |$, $2 \leq k \leq n-1$.

Due to the lexicographic minimisation of triad inconsistencies, this idea can be used to obtain the optimal completion if all missing comparisons are independent since $\{ i,j \} \cap \{ k, \ell \} = \emptyset$ implies that no triad contains both $a_{ij}$ and $a_{k \ell}$.

\begin{example} \label{Examp5}
Consider the following incomplete pairwise comparison matrix of order five, where the independent entries $a_{15}$ (thus $a_{51}$) and $a_{24}$ (thus $a_{42}$) are undefined:
\[
\mathbf{A} = \left[
\begin{array}{K{3em} K{3em} K{3em} K{3em} K{3em}}
    1     	 & 1	 	&  1/6  &  1/4   & a_{15} \\
    1		 & 1     	&  1/9  & a_{24} & 1     \\
    6     	 & 9     	& 1     & 3     & 5     \\
    4     	 & 1/a_{24} &  1/3  & 1     & 1     \\
    1/a_{15} & 1     	&  1/5  & 1     & 1     \\
\end{array}
\right].
\]

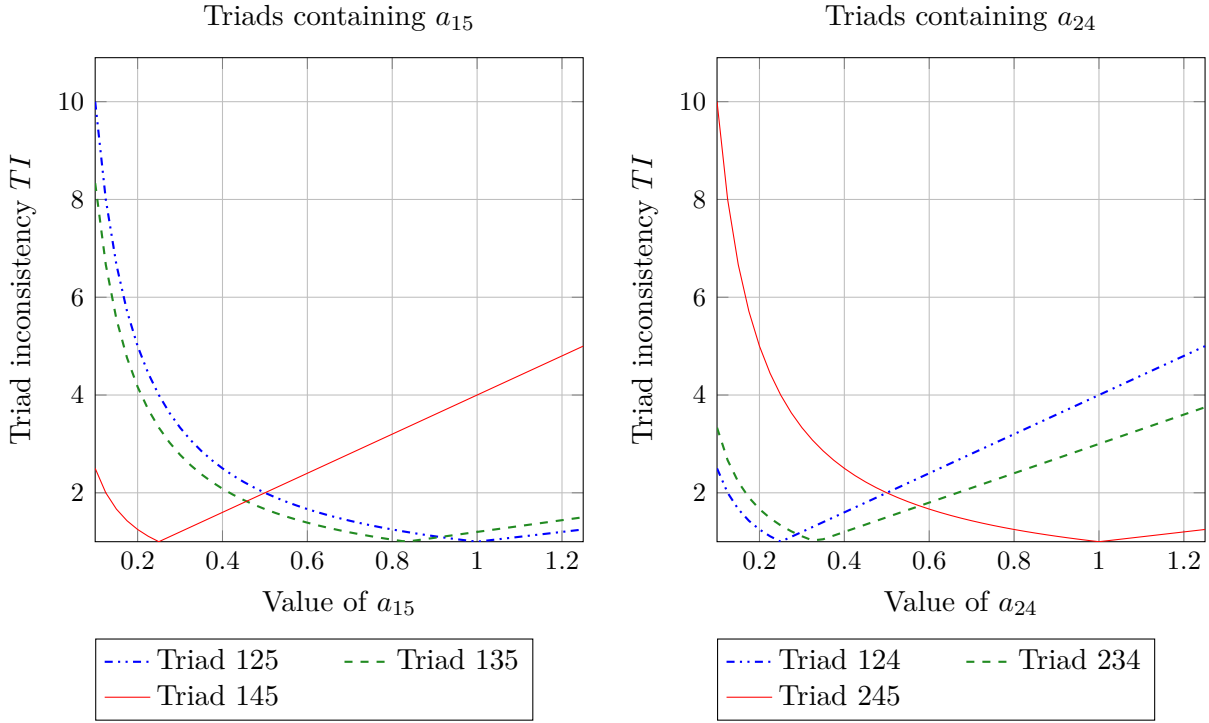
\begin{figure}[t!]
\centering

\begin{tikzpicture}
\begin{axis}[
name = axis1,
width = 0.5\textwidth, 
height = 0.5\textwidth,
title = {Triads containing $a_{15}$},
title style = {align=center, font=\small},
xmajorgrids = true,
ymajorgrids = true,
xmin = 0.1,
xmax = 1.25,
xlabel = {Value of $a_{15}$},
xlabel style = {align=center, font=\small},
scaled x ticks = false,
xticklabel style = {/pgf/number format/fixed,/pgf/number format/precision=2},
ymin = 1,
ylabel = {Triad inconsistency $TI$},
ylabel style = {align=center, font=\small},
legend style = {font=\small,at={(0,-0.2)},anchor=north west,legend columns=2},
legend entries = {Triad 125$\qquad$, Triad 135, Triad 145$\qquad$}
]
\addplot [blue, thick, dashdotdotted] coordinates{
(0.1,10)
(0.125,8)
(0.15,6.66666666666667)
(0.175,5.71428571428572)
(0.2,5)
(0.225,4.44444444444445)
(0.25,4)
(0.275,3.63636363636364)
(0.3,3.33333333333333)
(0.325,3.07692307692308)
(0.35,2.85714285714286)
(0.375,2.66666666666667)
(0.4,2.5)
(0.425,2.35294117647059)
(0.45,2.22222222222222)
(0.475,2.10526315789474)
(0.5,2)
(0.525,1.9047619047619)
(0.55,1.81818181818182)
(0.575,1.73913043478261)
(0.6,1.66666666666667)
(0.625,1.6)
(0.65,1.53846153846154)
(0.675,1.48148148148148)
(0.7,1.42857142857143)
(0.725,1.37931034482759)
(0.75,1.33333333333333)
(0.775,1.29032258064516)
(0.8,1.25)
(0.825,1.21212121212121)
(0.85,1.17647058823529)
(0.875,1.14285714285714)
(0.9,1.11111111111111)
(0.925,1.08108108108108)
(0.95,1.05263157894737)
(0.975,1.02564102564103)
(1,1)
(1.025,1.025)
(1.05,1.05)
(1.075,1.075)
(1.1,1.1)
(1.125,1.125)
(1.15,1.15)
(1.175,1.175)
(1.2,1.2)
(1.225,1.225)
(1.25,1.25)
};
\addplot [ForestGreen, thick, dashed] coordinates{
(0.1,8.333335)
(0.125,6.666668)
(0.15,5.55555666666667)
(0.175,4.76190571428572)
(0.2,4.1666675)
(0.225,3.70370444444445)
(0.25,3.333334)
(0.275,3.03030363636364)
(0.3,2.77777833333333)
(0.325,2.56410307692308)
(0.35,2.38095285714286)
(0.375,2.22222266666667)
(0.4,2.08333375)
(0.425,1.96078470588235)
(0.45,1.85185222222222)
(0.475,1.75438631578947)
(0.5,1.666667)
(0.525,1.58730190476191)
(0.55,1.51515181818182)
(0.575,1.44927565217391)
(0.6,1.38888916666667)
(0.625,1.3333336)
(0.65,1.28205153846154)
(0.675,1.23456814814815)
(0.7,1.19047642857143)
(0.725,1.14942551724138)
(0.75,1.11111133333333)
(0.775,1.07526903225806)
(0.8,1.041666875)
(0.825,1.01010121212121)
(0.85,1.01999979600004)
(0.875,1.04999979000004)
(0.9,1.07999978400004)
(0.925,1.10999977800004)
(0.95,1.13999977200005)
(0.975,1.16999976600005)
(1,1.19999976000005)
(1.025,1.22999975400005)
(1.05,1.25999974800005)
(1.075,1.28999974200005)
(1.1,1.31999973600005)
(1.125,1.34999973000005)
(1.15,1.37999972400005)
(1.175,1.40999971800006)
(1.2,1.43999971200006)
(1.225,1.46999970600006)
(1.25,1.49999970000006)
};
\addplot [red] coordinates{
(0.1,2.5)
(0.125,2)
(0.15,1.66666666666667)
(0.175,1.42857142857143)
(0.2,1.25)
(0.225,1.11111111111111)
(0.25,1)
(0.275,1.1)
(0.3,1.2)
(0.325,1.3)
(0.35,1.4)
(0.375,1.5)
(0.4,1.6)
(0.425,1.7)
(0.45,1.8)
(0.475,1.9)
(0.5,2)
(0.525,2.1)
(0.55,2.2)
(0.575,2.3)
(0.6,2.4)
(0.625,2.5)
(0.65,2.6)
(0.675,2.7)
(0.7,2.8)
(0.725,2.9)
(0.75,3)
(0.775,3.1)
(0.8,3.2)
(0.825,3.3)
(0.85,3.4)
(0.875,3.5)
(0.9,3.6)
(0.925,3.7)
(0.95,3.8)
(0.975,3.9)
(1,4)
(1.025,4.1)
(1.05,4.2)
(1.075,4.3)
(1.1,4.4)
(1.125,4.5)
(1.15,4.6)
(1.175,4.7)
(1.2,4.8)
(1.225,4.9)
(1.25,5)
};
\end{axis}

\begin{axis}[
at = {(axis1.south east)},
xshift = 0.11\textwidth,
width = 0.5\textwidth, 
height = 0.5\textwidth,
title = {Triads containing $a_{24}$},
title style = {align=center, font=\small},
xmajorgrids = true,
ymajorgrids = true,
xmin = 0.1,
xmax = 1.25,
xlabel = {Value of $a_{24}$},
xlabel style = {align=center, font=\small},
scaled x ticks = false,
xticklabel style = {/pgf/number format/fixed,/pgf/number format/precision=2},
ymin = 1,
ylabel = {Triad inconsistency $TI$},
ylabel style = {align=center, font=\small},
legend style = {font=\small,at={(0,-0.2)},anchor=north west,legend columns=2},
legend entries = {Triad 124$\qquad$, Triad 234, Triad 245$\qquad$}
]
\addplot [blue, thick, dashdotdotted] coordinates{
(0.1,2.5)
(0.125,2)
(0.15,1.66666666666667)
(0.175,1.42857142857143)
(0.2,1.25)
(0.225,1.11111111111111)
(0.25,1)
(0.275,1.1)
(0.3,1.2)
(0.325,1.3)
(0.35,1.4)
(0.375,1.5)
(0.4,1.6)
(0.425,1.7)
(0.45,1.8)
(0.475,1.9)
(0.5,2)
(0.525,2.1)
(0.55,2.2)
(0.575,2.3)
(0.6,2.4)
(0.625,2.5)
(0.65,2.6)
(0.675,2.7)
(0.7,2.8)
(0.725,2.9)
(0.75,3)
(0.775,3.1)
(0.8,3.2)
(0.825,3.3)
(0.85,3.4)
(0.875,3.5)
(0.9,3.6)
(0.925,3.7)
(0.95,3.8)
(0.975,3.9)
(1,4)
(1.025,4.1)
(1.05,4.2)
(1.075,4.3)
(1.1,4.4)
(1.125,4.5)
(1.15,4.6)
(1.175,4.7)
(1.2,4.8)
(1.225,4.9)
(1.25,5)
};
\addplot [ForestGreen, thick, dashed] coordinates{
(0.1,3.333333)
(0.125,2.6666664)
(0.15,2.222222)
(0.175,1.90476171428571)
(0.2,1.6666665)
(0.225,1.48148133333333)
(0.25,1.3333332)
(0.275,1.21212109090909)
(0.3,1.111111)
(0.325,1.02564092307692)
(0.35,1.05000010500001)
(0.375,1.12500011250001)
(0.4,1.20000012000001)
(0.425,1.27500012750001)
(0.45,1.35000013500001)
(0.475,1.42500014250001)
(0.5,1.50000015000002)
(0.525,1.57500015750002)
(0.55,1.65000016500002)
(0.575,1.72500017250002)
(0.6,1.80000018000002)
(0.625,1.87500018750002)
(0.65,1.95000019500002)
(0.675,2.02500020250002)
(0.7,2.10000021000002)
(0.725,2.17500021750002)
(0.75,2.25000022500002)
(0.775,2.32500023250002)
(0.8,2.40000024000002)
(0.825,2.47500024750003)
(0.85,2.55000025500003)
(0.875,2.62500026250003)
(0.9,2.70000027000003)
(0.925,2.77500027750003)
(0.95,2.85000028500003)
(0.975,2.92500029250003)
(1,3.00000030000003)
(1.025,3.07500030750003)
(1.05,3.15000031500003)
(1.075,3.22500032250003)
(1.1,3.30000033000003)
(1.125,3.37500033750003)
(1.15,3.45000034500003)
(1.175,3.52500035250004)
(1.2,3.60000036000004)
(1.225,3.67500036750004)
(1.25,3.75000037500004)
};
\addplot [red] coordinates{
(0.1,10)
(0.125,8)
(0.15,6.66666666666667)
(0.175,5.71428571428572)
(0.2,5)
(0.225,4.44444444444445)
(0.25,4)
(0.275,3.63636363636364)
(0.3,3.33333333333333)
(0.325,3.07692307692308)
(0.35,2.85714285714286)
(0.375,2.66666666666667)
(0.4,2.5)
(0.425,2.35294117647059)
(0.45,2.22222222222222)
(0.475,2.10526315789474)
(0.5,2)
(0.525,1.9047619047619)
(0.55,1.81818181818182)
(0.575,1.73913043478261)
(0.6,1.66666666666667)
(0.625,1.6)
(0.65,1.53846153846154)
(0.675,1.48148148148148)
(0.7,1.42857142857143)
(0.725,1.37931034482759)
(0.75,1.33333333333333)
(0.775,1.29032258064516)
(0.8,1.25)
(0.825,1.21212121212121)
(0.85,1.17647058823529)
(0.875,1.14285714285714)
(0.9,1.11111111111111)
(0.925,1.08108108108108)
(0.95,1.05263157894737)
(0.975,1.02564102564103)
(1,1)
(1.025,1.025)
(1.05,1.05)
(1.075,1.075)
(1.1,1.1)
(1.125,1.125)
(1.15,1.15)
(1.175,1.175)
(1.2,1.2)
(1.225,1.225)
(1.25,1.25)
};
\end{axis}
\end{tikzpicture}

\caption{Triad inconsistencies as a function of unknown entries in Example~\ref{Examp5}}
\label{Fig2}

\end{figure}


Both $a_{15}$ and $a_{24}$ are contained in three triads, respectively. The natural triad inconsistency index $TI$ of these triads depends on the value of the missing entries as shown in Figure~\ref{Fig2}. Hence, the inconsistencies of triads 125, 135, 145 are minimised lexicographically if $a_{15} = 0.5$ (where the minimum value of the upper envelope is reached on the left chart), and the inconsistencies of triads 124, 234, 245 are minimised lexicographically if $a_{24} = 0.5$ (where the minimum value of the upper envelope is reached on the right chart).
\end{example}

Even though the transformation above is not applicable if a row contains more than one unknown comparison, the number of variables in the LP remains $m$ (the number of missing elements) and the number of constraints is less than $2m(n-2)$. Consequently, the optimal completion can be calculated with essentially any software containing an LP solver module such as Python, R, and even Microsoft Excel (using the free Solver add-in).

\section{Numerical experiment: the similarity of completion methods} \label{Sec6}

If an incomplete pairwise comparison matrix can be filled out to get a consistent matrix, then all methods that optimise a reasonable inconsistency index of the complete matrix result in the same solution. However, this is not necessarily the case in general. The Koczkodaj inconsistency index---in contrast to most other measures of inconsistency---does not examine the average inconsistency of the set of pairwise comparisons but the highest local inconsistency. Therefore, the difference between the proposed lexicographically optimal completion and the filling which minimises the consistency index suggested by Saaty \citep{BozokiFulopRonyai2010, ShiraishiObataDaigo1998, ShiraishiObata2002} is worth exploring. As usual, the relationship is studied with respect to the inconsistency of the original incomplete pairwise comparison matrix.
Note that the $CR$-optimal completion has recently been compared to other techniques used to estimate the missing values \citep{TekileBrunelliFedrizzi2023}.

The similarity of the two procedures is examined by comparing the corresponding complete pairwise comparison matrices derived by minimising the given objective function. For this purpose, the incompatibility index defined by \citet{Saaty2008} and used in \citet{KulakowskiMazurekStrada2022} is adopted after some monotonic transformations.\footnote{~Previous authors have called it compatibility index, however, it measures the level of incompatibility.}

\begin{definition} \label{Def61}
\emph{Incompatibility index}:
Let $\mathbf{A} = \left[ a_{ij} \right] \in \mathcal{A}^{n \times n}$ and $\mathbf{B} = \left[ b_{ij} \right] \in \mathcal{A}^{n \times n}$ be two pairwise comparison matrices of the same order $n$. Their \emph{incompatibility index} is as follows:
\[
ICI \left( \mathbf{A}, \mathbf{B} \right) = 100 \times \left( \frac{1}{n^2} \sum_{i=1}^n \sum_{j=1}^n a_{ij} b_{ji} - 1 \right).
\]
\end{definition}
Clearly, $ICI \left( \mathbf{A}, \mathbf{B} \right) = 0$ if and only if matrices $\mathbf{A}$ and $\mathbf{B}$ coincide. Otherwise, this value is positive.

Four cases are investigated:
\begin{itemize}
\item
$n=5$ and $m=1$ (five alternatives with one missing comparison);
\item
$n=5$ and $m=2$ (five alternatives with two missing comparisons);
\item
$n=6$ and $m=6$ (six alternatives with six missing comparisons);
\item
$n=10$ and $m=1$ (ten alternatives with one missing comparison).
\end{itemize}

Our Monte Carlo experiment is implemented in the following way:
\begin{itemize}
\item
A random matrix is generated such that all randomly chosen known entries take a value from the Saaty scale of
\[
\{ 1/9,\, 1/8,\, \dots ,\, 1/2,\, 1,\, 2,\, \dots ,\, 9 \},
\]
independently from each other.

\item
The incomplete pairwise comparison matrix is retained if its (in)consistency ratio is below the $0.1$ threshold ($0.5$ if $n=10$ and $m=1$), which can be calculated using the values of the random index for incomplete pairwise comparison matrices provided in \citet[Table~2]{AgostonCsato2022}. However, for $n=10$ and $m=1$, the linear approximation of equation (3) in \citet{AgostonCsato2022} is considered due to the time-consuming computation.

\item
The connectedness of the associated graph is checked if $n=6$ and $m=6$ to guarantee that a unique optimal completion exists.
\end{itemize}
The procedure is repeated until we get 500 matrices satisfying the required properties.

Two remarks should be mentioned regarding the calculations. First, the lexicographically optimal completion requires the identification of binding constraints in LP problems. Therefore, we have fixed the right-hand side of one constraint with a positive dual variable in each step. Second, in order to avoid numerical difficulties in the eigenvalue minimisation problem, the pairwise comparison matrix is not considered if the optimal value of a missing entry is lower than $1/9$ or higher than $9$ . The latter issue has been discussed in more detail in \citet[Section~4]{AgostonCsato2022}.

\input{Figure3_optimal_completion_comparison}

The results are depicted in Figure~\ref{Fig3}. Unsurprisingly, the two completion methods are more similar if the number of alternatives $n$ or the number of missing elements $m$ is smaller. Analogously, the estimated values of the unknown entries are generally closer as the inconsistency of the incomplete matrix decreases.

Examples with a high incompatibility index below the 10\% threshold are worth further examination.
For $n=5$ and $m=1$, the lexicographical minimisation and the eigenvector method imply the following optimal solutions, respectively (the missing entries are written in bold):
\[
\mathbf{A}^{(1)} = \left[
\begin{array}{K{3.5em} K{3.5em} K{3.5em} K{3.5em} K{3.5em}}
    1     &  1/2  & 5     &  1/6  & \mathbf{0.2440} \\
    2     & 1     & 4     &  1/2  &  1/6 \\
     1/5  &  1/4  & 1     &  1/6  &  1/7 \\
    6     & 2     & 6     & 1     &  1/2 \\
    \mathbf{4.0988} & 6     & 7     & 2     & 1     \\
\end{array}
\right]
\text{ and}
\]
\[
\mathbf{A}^{(2)} = \left[
\begin{array}{K{3.5em} K{3.5em} K{3.5em} K{3.5em} K{3.5em}}
    1     &  1/2  & 5     &  1/6  & \mathbf{0.1798} \\
    2     & 1     & 4     &  1/2  &  1/6 \\
     1/5  &  1/4  & 1     &  1/6  &  1/7 \\
    6     & 2     & 6     & 1     &  1/2 \\
    \mathbf{5.5628} & 6     & 7     & 2     & 1     \\
\end{array}
\right].
\]
Here the unknown entry $a_{51}$ should be replaced with a value slightly above $4$ if the inconsistencies of the triads are minimised lexicographically but the corresponding variable is above $5.5$ if the dominant eigenvalue of the filled matrix is optimised.

For $n=5$ and $m=2$, the lexicographic minimisation, the eigenvector, and the logarithmic least squares methods \citep{BozokiFulopRonyai2010, BozokiTsyganok2019} provide the following optimal solutions, respectively (the missing entries are written in bold):
\[
\mathbf{B}^{(1)} = \left[
\begin{array}{K{3.5em} K{3.5em} K{3.5em} K{3.5em} K{3.5em}}
    1     & \mathbf{0.8274} &  1/6  &  1/4  & \mathbf{0.4564} \\
    \mathbf{1.2086} & 1     &  1/9  &  1/6  & 1     \\
    6     & 9     & 1     & 3     & 5     \\
    4     & 6     &  1/3  & 1     & 1     \\
    \mathbf{2.1909} & 1     &  1/5  & 1     & 1     \\
\end{array}
\right];
\]
\[
\mathbf{B}^{(2)} = \left[
\begin{array}{K{3.5em} K{3.5em} K{3.5em} K{3.5em} K{3.5em}}
    1     & \mathbf{1.0993} &  1/6  &  1/4  & \mathbf{0.6047} \\
    \mathbf{0.9097} & 1     &  1/9  &  1/6  & 1     \\
    6     & 9     & 1     & 3     & 5     \\
    4     & 6     &  1/3  & 1     & 1     \\
    \mathbf{1.6537} & 1     &  1/5  & 1     & 1     \\
\end{array}
\right];
\]
\[
\mathbf{B}^{(3)} = \left[
\begin{array}{K{3.5em} K{3.5em} K{3.5em} K{3.5em} K{3.5em}}
    1     & \mathbf{1.1141} &  1/6  &  1/4  & \mathbf{0.6146} \\
    \mathbf{0.8976} & 1     &  1/9  &  1/6  & 1     \\
    6     & 9     & 1     & 3     & 5     \\
    4     & 6     &  1/3  & 1     & 1     \\
    \mathbf{1.6272} & 1     &  1/5  & 1     & 1     \\
\end{array}
\right].
\]
Now the lexicographic optimisation means that $b^{(1)}_{12}$ is below one, however, the eigenvector and logarithmic least squares minimisations imply that $b^{(2)}_{12}$ and $b^{(3)}_{12}$ are above one. Furthermore, the row geometric mean ($GM$) and the eigenvector ($EM$) weighting methods result in the following priorities:
\[
\mathbf{w}^{(GM)} \left( \mathbf{B}^{(1)} \right) = \left[
\begin{array}{K{3em} K{3em} K{3em} K{3em} K{3em}}
6.153 & 6.602 & 53.879 & 21.396 & 11.969 \\
\end{array}
\right];
\]
\[
\mathbf{w}^{(EM)} \left( \mathbf{B}^{(1)} \right) = \left[
\begin{array}{K{3em} K{3em} K{3em} K{3em} K{3em}}
5.988 & 6.810 & 52.723 & 22.162 & 12.317 \\
\end{array}
\right];
\]
\[
\mathbf{w}^{(GM)} \left( \mathbf{B}^{(2)} \right) = \left[
\begin{array}{K{3em} K{3em} K{3em} K{3em} K{3em}}
6.909 & 6.255 & 54.032 & 21.457 & 11.346 \\
\end{array}
\right];
\]
\[
\mathbf{w}^{(EM)} \left( \mathbf{B}^{(2)} \right) = \left[
\begin{array}{K{3em} K{3em} K{3em} K{3em} K{3em}}
6.716 & 6.458 & 52.693 & 22.302 & 11.831 \\
\end{array}
\right];
\]
\[
\mathbf{w}^{(GM)} \left( \mathbf{B}^{(3)} \right) = \left[
\begin{array}{K{3em} K{3em} K{3em} K{3em} K{3em}}
6.951 & 6.239 & 54.039 & 21.460 & 11.311 \\
\end{array}
\right];
\]
\[
\mathbf{w}^{(EM)} \left( \mathbf{B}^{(3)} \right) = \left[
\begin{array}{K{3em} K{3em} K{3em} K{3em} K{3em}}
6.758 & 6.441 & 52.688 & 22.310 & 11.803 \\
\end{array}
\right].
\]
It can be seen that the first alternative has a smaller weight than the second according to both procedures for the first matrix $\mathbf{B}^{(1)}$, while the first alternative is judged better than the second according to both algorithms based on the second matrix $\mathbf{B}^{(2)}$ and the third matrix $\mathbf{B}^{(3)}$. 

Consequently, the lexicographically optimal completion method might lead to a different ranking of the alternatives than other widely used procedures even in the relatively simple case of five alternatives and two missing comparisons. This is far from surprising since the suggested algorithm focuses on the triads separately, while the eigenvector and logarithmic least squares techniques consider inconsistencies across the whole matrix. Nonetheless, these incomplete pairwise comparison matrices might require further investigation as they potentially imply an uncertain ranking of the alternatives.

\section{Conclusions} \label{Sec7}

We have proposed a new procedure to reconstruct the missing values in an incomplete pairwise comparison matrix. Inspired by the idea behind the nucleolus, a desirable payoff-sharing solution in cooperative game theory, our approach chooses the unknown comparisons such that the inconsistencies of the triads will be lexicographically minimal. Our main theoretical result is proving a natural sufficient and necessary condition for the uniqueness of this completion. In particular, the associated undirected graph should be connected, which can be easily checked and does not depend on the value of the known comparisons. Even though the same condition is required for the eigenvalue-based and the logarithmic least squares optimal fillings \citep{BozokiFulopRonyai2010}, our algorithm does not depend on a particular inconsistency index chosen from the plethora of available measures \citep{Brunelli2018} as there exists essentially one triad inconsistency index.
The lexicographically optimal solution has been compared to the eigenvalue minimisation technique through a numerical Monte Carlo experiment.

There are several interesting directions to continue this work. In cooperative game theory, computing the nucleolus is a challenging task \citep{BenedekFliegeNguyen2021}. Therefore, it might be non-trivial to obtain the lexicographically minimal solution for some incomplete matrices considered in the literature: \citet{Csato2013a} has analysed a matrix with 149 alternatives where the ratio of known comparisons is below 10\%.
Since the optimal completion with respect to the geometric consistency index has nice graph theoretical relations \citep{Bozoki2017, BozokiTsyganok2019, ChenKouMichaelTarnSong2015}, it remains to be seen whether our proposal can be supported by similar arguments. 
Other aspects of the problem can be taken into account to reproduce the missing entries. For instance, \citet{FaramondiOlivaBozoki2020} compute ordinal preferences first, then seek a cardinal ranking by enforcing these ordinal constraints. Finally, a thorough comparison of various completion methods proposed for pairwise comparison matrices with missing entries can support the decision-maker in dealing with unknown comparisons.

\section*{Acknowledgements}
\addcontentsline{toc}{section}{Acknowledgements}
\noindent
We are grateful to \emph{S\'andor Boz\'oki} and \emph{Zsombor Sz\'adoczki} for useful advice. \\
Eight anonymous reviewers provided valuable comments and suggestions on earlier drafts.

\bibliographystyle{apalike}
\bibliography{All_references}

\begin{thebibliography}{}

\bibitem[{\'A}goston and Csat{\'o}, 2022]{AgostonCsato2022}
{\'A}goston, K.~{\relax Cs}. and Csat{\'o}, L. (2022).
\newblock Inconsistency thresholds for incomplete pairwise comparison matrices.
\newblock {\em Omega}, 108:102576.

\bibitem[Aguar{\'o}n et~al., 2021]{AguaronEscobarMoreno-Jimenez2021}
Aguar{\'o}n, J., Escobar, M.~T., and Moreno-Jim{\'e}nez, J.~M. (2021).
\newblock Reducing inconsistency measured by the {G}eometric {C}onsistency
  {I}ndex in the {A}nalytic {H}ierarchy {P}rocess.
\newblock {\em European Journal of Operational Research}, 288(2):576--583.

\bibitem[Aguar\'on and Moreno-Jim{\'e}nez, 2003]{AguaronMoreno-Jimenez2003}
Aguar\'on, J. and Moreno-Jim{\'e}nez, J.~M. (2003).
\newblock The geometric consistency index: Approximated thresholds.
\newblock {\em European Journal of Operational Research}, 147(1):137--145.

\bibitem[Benedek et~al., 2021]{BenedekFliegeNguyen2021}
Benedek, M., Fliege, J., and Nguyen, T.-D. (2021).
\newblock Finding and verifying the nucleolus of cooperative games.
\newblock {\em Mathematical Programming}, 190(1):135--170.

\bibitem[Bhushan and Rai, 2007]{BhushanRai2007}
Bhushan, N. and Rai, K. (2007).
\newblock {\em Strategic Decision Making: Applying the Analytic Hierarchy
  Process}.
\newblock Springer Science \& Business Media, London.

\bibitem[Bortot et~al., 2023]{BortotBrunelliFedrizziPereira2023}
Bortot, S., Brunelli, M., Fedrizzi, M., and Pereira, R.~A.~M. (2023).
\newblock A novel perspective on the inconsistency indices of reciprocal
  relations and pairwise comparison matrices.
\newblock {\em Fuzzy Sets and Systems}, 454:74--99.

\bibitem[Boz{\'o}ki, 2017]{Bozoki2017}
Boz{\'o}ki, S. (2017).
\newblock Two short proofs regarding the logarithmic least squares optimality
  in {C}hen, {K}., {K}ou, {G}., {T}arn, {J}.~{M}., {S}ong, {Y}. (2015):
  Bridging the gap between missing and inconsistent values in eliciting
  preference from pairwise comparison matrices, {A}nnals of {O}perations
  {R}esearch 235(1):155--175.
\newblock {\em Annals of Operations Research}, 253(1):707--708.

\bibitem[Boz{\'o}ki et~al., 2016]{BozokiCsatoTemesi2016}
Boz{\'o}ki, S., Csat{\'o}, L., and Temesi, J. (2016).
\newblock An application of incomplete pairwise comparison matrices for ranking
  top tennis players.
\newblock {\em European Journal of Operational Research}, 248(1):211--218.

\bibitem[Boz\'oki et~al., 2010]{BozokiFulopRonyai2010}
Boz\'oki, S., F\"ul\"op, J., and R\'onyai, L. (2010).
\newblock On optimal completion of incomplete pairwise comparison matrices.
\newblock {\em Mathematical and Computer Modelling}, 52(1-2):318--333.

\bibitem[Boz{\'o}ki and Rapcs{\'a}k, 2008]{BozokiRapcsak2008}
Boz{\'o}ki, S. and Rapcs{\'a}k, T. (2008).
\newblock On {S}aaty's and {K}oczkodaj's inconsistencies of pairwise comparison
  matrices.
\newblock {\em Journal of Global Optimization}, 42(2):157--175.

\bibitem[Boz{\'o}ki and Tsyganok, 2019]{BozokiTsyganok2019}
Boz{\'o}ki, S. and Tsyganok, V. (2019).
\newblock The (logarithmic) least squares optimality of the arithmetic
  (geometric) mean of weight vectors calculated from all spanning trees for
  incomplete additive (multiplicative) pairwise comparison matrices.
\newblock {\em International Journal of General Systems}, 48(4):362--381.

\bibitem[Brunelli, 2018]{Brunelli2018}
Brunelli, M. (2018).
\newblock A survey of inconsistency indices for pairwise comparisons.
\newblock {\em International Journal of General Systems}, 47(8):751--771.

\bibitem[Brunelli and Fedrizzi, 2023]{BrunelliFedrizzi2023}
Brunelli, M. and Fedrizzi, M. (2023).
\newblock Inconsistency indices for pairwise comparisons and the {P}areto
  dominance principle.
\newblock {\em European Journal of Operational Research}, in press.
\newblock {DOI}:
  \href{https://doi.org/10.1016/j.ejor.2023.06.033}{10.1016/j.ejor.2023.06.033}.

\bibitem[Cavallo, 2020]{Cavallo2020}
Cavallo, B. (2020).
\newblock Functional relations and {S}pearman correlation between consistency
  indices.
\newblock {\em Journal of the Operational Research Society}, 71(2):301--311.

\bibitem[Chao et~al., 2018]{ChaoKouLiPeng2018}
Chao, X., Kou, G., Li, T., and Peng, Y. (2018).
\newblock Jie {K}e versus {A}lpha{G}o: A ranking approach using decision making
  method for large-scale data with incomplete information.
\newblock {\em European Journal of Operational Research}, 265(1):239--247.

\bibitem[Chen et~al., 2015]{ChenKouMichaelTarnSong2015}
Chen, K., Kou, G., Michael~Tarn, J., and Song, Y. (2015).
\newblock Bridging the gap between missing and inconsistent values in eliciting
  preference from pairwise comparison matrices.
\newblock {\em Annals of Operations Research}, 235(1):155--175.

\bibitem[Choo and Wedley, 2004]{ChooWedley2004}
Choo, E.~U. and Wedley, W.~C. (2004).
\newblock A common framework for deriving preference values from pairwise
  comparison matrices.
\newblock {\em Computers \& Operations Research}, 31(6):893--908.

\bibitem[Crawford and Williams, 1985]{CrawfordWilliams1985}
Crawford, G. and Williams, C. (1985).
\newblock A note on the analysis of subjective judgment matrices.
\newblock {\em Journal of Mathematical Psychology}, 29(4):387--405.

\bibitem[Csat\'o, 2013]{Csato2013a}
Csat\'o, L. (2013).
\newblock Ranking by pairwise comparisons for {S}wiss-system tournaments.
\newblock {\em Central European Journal of Operations Research},
  21(4):783--803.

\bibitem[Csat\'o, 2018]{Csato2018a}
Csat\'o, L. (2018).
\newblock Characterization of an inconsistency ranking for pairwise comparison
  matrices.
\newblock {\em Annals of Operations Research}, 261(1-2):155--165.

\bibitem[Csat\'o, 2019]{Csato2019b}
Csat\'o, L. (2019).
\newblock Axiomatizations of inconsistency indices for triads.
\newblock {\em Annals of Operations Research}, 280(1-2):99--110.

\bibitem[Driessen, 1988]{Driessen1988}
Driessen, T.~S.~H. (1988).
\newblock {\em Cooperative Games, Solutions and Applications}.
\newblock Springer Science \& Business Media, Dordrecht.

\bibitem[Duszak and Koczkodaj, 1994]{DuszakKoczkodaj1994}
Duszak, Z. and Koczkodaj, W.~W. (1994).
\newblock Generalization of a new definition of consistency for pairwise
  comparisons.
\newblock {\em Information Processing Letters}, 52(5):273--276.

\bibitem[Ergu and Kou, 2013]{ErguKou2013}
Ergu, D. and Kou, G. (2013).
\newblock Data inconsistency and incompleteness processing model in decision
  matrix.
\newblock {\em Studies in Informatics and Control}, 22(4):359--366.

\bibitem[Faramondi et~al., 2020]{FaramondiOlivaBozoki2020}
Faramondi, L., Oliva, G., and Boz{\'o}ki, S. (2020).
\newblock Incomplete analytic hierarchy process with minimum weighted ordinal
  violations.
\newblock {\em International Journal of General Systems}, 49(6):574--601.

\bibitem[Fedrizzi and Giove, 2007]{FedrizziGiove2007}
Fedrizzi, M. and Giove, S. (2007).
\newblock Incomplete pairwise comparison and consistency optimization.
\newblock {\em European Journal of Operational Research}, 183(1):303--313.

\bibitem[Forman and Gass, 2001]{FormanGass2001}
Forman, E.~H. and Gass, S.~I. (2001).
\newblock The {A}nalytic {H}ierarchy {P}rocess---{A}n exposition.
\newblock {\em Operations Research}, 49(4):469--486.

\bibitem[Golany and Kress, 1993]{GolanyKress1993}
Golany, B. and Kress, M. (1993).
\newblock A multicriteria evaluation of methods for obtaining weights from
  ratio-scale matrices.
\newblock {\em European Journal of Operational Research}, 69(2):210--220.

\bibitem[Harker, 1987a]{Harker1987a}
Harker, P.~T. (1987a).
\newblock Alternative modes of questioning in the {A}nalytic {H}ierarchy
  {P}rocess.
\newblock {\em Mathematical Modelling}, 9(3-5):353--360.

\bibitem[Harker, 1987b]{Harker1987b}
Harker, P.~T. (1987b).
\newblock Incomplete pairwise comparisons in the {A}nalytic {H}ierarchy
  {P}rocess.
\newblock {\em Mathematical Modelling}, 9(11):837--848.

\bibitem[Ishizaka and Labib, 2011]{IshizakaLabib2011}
Ishizaka, A. and Labib, A. (2011).
\newblock Review of the main developments in the analytic hierarchy process.
\newblock {\em Expert Systems with Applications}, 38(11):14336--14345.

\bibitem[Koczkodaj, 1993]{Koczkodaj1993}
Koczkodaj, W.~W. (1993).
\newblock A new definition of consistency of pairwise comparisons.
\newblock {\em Mathematical and Computer Modelling}, 18(7):79--84.

\bibitem[Koczkodaj et~al., 1999]{KoczkodajHermanOrlowski1999}
Koczkodaj, W.~W., Herman, M.~W., and Orlowski, M. (1999).
\newblock Managing null entries in pairwise comparisons.
\newblock {\em Knowledge and Information Systems}, 1(1):119--125.

\bibitem[Koczkodaj et~al., 2015]{KoczkodajKosiekSzybowskiXu2015}
Koczkodaj, W.~W., Kosiek, M., Szybowski, J., and Xu, D. (2015).
\newblock Fast convergence of distance-based inconsistency in pairwise
  comparisons.
\newblock {\em Fundamenta Informaticae}, 137(3):355--367.

\bibitem[Koczkodaj and Szybowski, 2016]{KoczkodajSzybowski2016}
Koczkodaj, W.~W. and Szybowski, J. (2016).
\newblock The limit of inconsistency reduction in pairwise comparisons.
\newblock {\em International Journal of Applied Mathematics and Computer
  Science}, 26(3):721--729.

\bibitem[Kou et~al., 2016]{KouErguLinChen2016}
Kou, G., Ergu, D., Lin, C., and Chen, Y. (2016).
\newblock Pairwise comparison matrix in multiple criteria decision making.
\newblock {\em Technological and Economic Development of Economy},
  22(5):738--765.

\bibitem[Ku{\l}akowski et~al., 2022]{KulakowskiMazurekStrada2022}
Ku{\l}akowski, K., Mazurek, J., and Strada, M. (2022).
\newblock On the similarity between ranking vectors in the pairwise comparison
  method.
\newblock {\em Journal of the Operational Research Society}, 73(9):2080--2089.

\bibitem[Mazurek et~al., 2020]{MazurekPerzinaStrzalkaKowal2020}
Mazurek, J., Perzina, R., Strza{\l}ka, D., and Kowal, B. (2020).
\newblock A new step-by-step ({SBS}) algorithm for inconsistency reduction in
  pairwise comparisons.
\newblock {\em IEEE Access}, 8:135821--135828.

\bibitem[Ozdemir, 2005]{Ozdemir2005}
Ozdemir, M.~S. (2005).
\newblock Validity and inconsistency in the analytic hierarchy process.
\newblock {\em Applied Mathematics and Computation}, 161(3):707--720.

\bibitem[Pan et~al., 2014]{PanLuLiuDeng2014}
Pan, D., Lu, X., Liu, J., and Deng, Y. (2014).
\newblock A ranking procedure by incomplete pairwise comparisons using
  information entropy and {D}empster-{S}hafer evidence theory.
\newblock {\em The Scientific World Journal}, 2014:904596.

\bibitem[Pereira and Bamel, 2023]{PereiraBamel2023}
Pereira, V. and Bamel, U. (2023).
\newblock Charting the managerial and theoretical evolutionary path of {AHP}
  using thematic and systematic review: a decadal (2012--2021) study.
\newblock {\em Annals of Operations Research}, 326(2):635--651.

\bibitem[Petr{\'o}czy and Csat{\'o}, 2021]{PetroczyCsato2021}
Petr{\'o}czy, D.~G. and Csat{\'o}, L. (2021).
\newblock Revenue allocation in {F}ormula {O}ne: {A} pairwise comparison
  approach.
\newblock {\em International Journal of General Systems}, 50(3):243--261.

\bibitem[Saaty, 1977]{Saaty1977}
Saaty, T.~L. (1977).
\newblock A scaling method for priorities in hierarchical structures.
\newblock {\em Journal of Mathematical Psychology}, 15(3):234--281.

\bibitem[Saaty, 1980]{Saaty1980}
Saaty, T.~L. (1980).
\newblock {\em The {A}nalytic {H}ierarchy {P}rocess: Planning, Priority
  Setting, Resource Allocation}.
\newblock McGraw-Hill, New York.

\bibitem[Saaty, 2008]{Saaty2008}
Saaty, T.~L. (2008).
\newblock Relative measurement and its generalization in decision making. why
  pairwise comparisons are central in mathematics for the measurement of
  intangible factors. the analytic hierarchy/network process.
\newblock {\em RACSAM - Revista de la Real Academia de Ciencias Exactas,
  Fisicas y Naturales. Serie A. Matematicas}, 102(2):251--318.

\bibitem[Schmeidler, 1969]{Schmeidler1969}
Schmeidler, D. (1969).
\newblock The nucleolus of a characteristic function game.
\newblock {\em SIAM Journal on Applied Mathematics}, 17(6):1163--1170.

\bibitem[Shiraishi and Obata, 2002]{ShiraishiObata2002}
Shiraishi, S. and Obata, T. (2002).
\newblock On a maximization problem arising from a positive reciprocal matrix
  in {AHP}.
\newblock {\em Bulletin of Informatics and Cybernetics}, 34(2):91--96.

\bibitem[Shiraishi et~al., 1998]{ShiraishiObataDaigo1998}
Shiraishi, S., Obata, T., and Daigo, M. (1998).
\newblock Properties of a positive reciprocal matrix and their application to
  {AHP}.
\newblock {\em Journal of the Operations Research Society of Japan},
  41(3):404--414.

\bibitem[Siraj et~al., 2012]{SirajMikhailovKeane2012}
Siraj, S., Mikhailov, L., and Keane, J.~A. (2012).
\newblock Enumerating all spanning trees for pairwise comparisons.
\newblock {\em Computers \& Operations Research}, 39(2):191--199.

\bibitem[Tekile et~al., 2023]{TekileBrunelliFedrizzi2023}
Tekile, H.~A., Brunelli, M., and Fedrizzi, M. (2023).
\newblock A numerical comparative study of completion methods for pairwise
  comparison matrices.
\newblock {\em Operations Research Perspectives}, 10:100272.

\bibitem[Ure{\~n}a et~al.,
  2015]{UrenaChiclanaMorente-MolineraHerrera-Viedma2015}
Ure{\~n}a, R., Chiclana, F., Morente-Molinera, J.~A., and Herrera-Viedma, E.
  (2015).
\newblock Managing incomplete preference relations in decision making: a review
  and future trends.
\newblock {\em Information Sciences}, 302:14--32.

\bibitem[Vaidya and Kumar, 2006]{VaidyaKumar2006}
Vaidya, O.~S. and Kumar, S. (2006).
\newblock Analytic hierarchy process: An overview of applications.
\newblock {\em European Journal of Operational Research}, 169(1):1--29.

\bibitem[Vargas, 1990]{Vargas1990}
Vargas, L.~G. (1990).
\newblock An overview of the analytic hierarchy process and its applications.
\newblock {\em European Journal of Operational Research}, 48(1):2--8.

\bibitem[Zhou et~al., 2018]{ZhouHuDengChanIshizaka2018}
Zhou, X., Hu, Y., Deng, Y., Chan, F.~T.~S., and Ishizaka, A. (2018).
\newblock A {DEMATEL}-based completion method for incomplete pairwise
  comparison matrix in {AHP}.
\newblock {\em Annals of Operations Research}, 271(2):1045--1066.

\end{thebibliography}

\end{document}